\documentclass[a4paper,11pt]{amsart}

\usepackage{graphicx}
\usepackage{mathptmx}
\usepackage{amsmath}
\usepackage{amssymb}
\usepackage{enumitem}
\usepackage{xcolor} 
\usepackage{pgfplots}

\newmuskip\pFqmuskip

\newcommand*\pFq[6][8]{%
  \begingroup 
  \pFqmuskip=#1mu\relax
  \mathcode`=\string"8000
  \begingroup\lccode`\~=`\,
  \lowercase{\endgroup\let~}\pFqcomma
  F^{#2}_{#3}{\left(\genfrac..{0pt}{}{#4}{#5}\bigg|#6\right)}%
  \endgroup
}
\newcommand{\pFqcomma}{\mskip\pFqmuskip}

\newtheorem{theorem}{Theorem}[section]
\newtheorem{lemma}[theorem]{Lemma}
\newtheorem{corollary}[theorem]{Corollary}
\newtheorem{proposition}[theorem]{Proposition}

\begin{document}

\title[Logarithms and Stirling numbers associated with delta series]{Logarithms and Stirling numbers associated with delta series}

\author{Dae San  Kim}
\address{Department of Mathematics, Sogang University, Seoul 121-742, Republic of Korea}
\email{dskim@sogang.ac.kr}

\author{Taekyun  Kim}
\address{Department of Mathematics, Kwangwoon University, Seoul 139-701, Republic of Korea}
\email{tkkim@kw.ac.kr}

\subjclass[2010]{11B68; 11B73; 11B83}
\keywords{logarithm associated with delta series; Stirling numbers of the second associated with delta series; Stirling numbers of the first kind associated with delta series; Schl\"omilch-type formula}

\begin{abstract}
This paper investigates the Stirling numbers of the first and second kind associated with a delta series $f(t)$, denoted by $S_{1}(n,k; f(t))$ and $S_{2}(n,k; f(t))$. These numbers provide a robust framework that satisfies the orthogonality and inverse relations, often lacking in recent probabilistic Stirling and $B$-Stirling numbers. Key contributions include the definition and analysis of the logarithm associated with a delta series $f(t)$. We further establish a Schl\"{o}milch-type formula, which provides an explicit connection between the two kinds of Stirling numbers. Using this formula, we derive another expression for the associated logarithm in terms of $S_{2}(n,k; f(t))$. Finally, we provide fifteen concrete examples to illustrate the versatility of this framework, demonstrating how it unifies and extends several known results in combinatorial analysis.
\end{abstract}

\maketitle

\section{Introduction}
The Bernoulli polynomials of order $\alpha$ and the Bernoulli numbers of order $\alpha$ are respectively defined by 
\begin{equation} \label{1}
\Big(\frac{t}{e^{t}-1}\Big)^{\alpha}e^{x t}=\sum_{n=0}^{\infty}B_{n}^{(\alpha)}(x)\frac{t^{n}}{n!},\quad B_{n}^{(\alpha)}=B_{n}^{(\alpha)}(0),
\end{equation}
for any complex number $\alpha$. \\
When $\alpha=1$, $B_{n}(x)=B_{n}^{1}(x)$ are called the Bernoulli polynomials. \par
The Stirling numbers of the second kind $S_{2}(n,k)$ are characterized by
\begin{equation} \label{2}
\frac{1}{k!}(e^{t}-1)^{k}=\sum_{n=k}^{\infty}S_{2}(n,k)\frac{t^{n}}{n!}, \quad x^{n}=\sum_{k=0}^{n}S_{2}(n,k)(x)_{k},
\end{equation}
while the Stirling numbers of the first kind are specified as
\begin{equation} \label{3}
\frac{1}{k!}\big(\log(1+t)\big)^{k}=\sum_{n=k}^{\infty}S_{1}(n,k)\frac{t^{n}}{n!},\quad (x)_{n}=\sum_{k=0}^{n}S_{1}(n,k)x^{k},
\end{equation}
where $(x)_{n}$ are the falling factorial sequence given by
\begin{equation} \label{4}
(x)_{0}=1,\quad (x)_{n}=x(x-1)\cdots(x-n+1),\quad(n \ge 1).
\end{equation} \par
Throughout this paper, let $\lambda$ denote any nonzero real number. 
The degenerate exponentials are introduced as
\begin{equation} \label{5}
\begin{aligned}
&e_{\lambda}^{x}(t)=(1+\lambda t)^{\frac{x}{\lambda}}=\sum_{n=0}^{\infty}(x)_{n,\lambda}\frac{t^{n}}{n!}, \\
&e_{\lambda}(t)=e_{\lambda}^{1}(t)=(1+\lambda t)^{\frac{1}{\lambda}}=\sum_{n=0}^{\infty}(1)_{n,\lambda}\frac{t^{n}}{n!},
\end{aligned} 
\end{equation}
where the degenerate falling factorial sequence is given by
\begin{equation} \label{6}
(x)_{0,\lambda}=1,\quad (x)_{n,\lambda}=x(x-\lambda)\cdots(x-(n-1)\lambda),\quad(n \ge 1).
\end{equation}
The degenerate Bernoulli polynomials of order $\alpha$ and the degenerate Bernoulli numbers of order $\alpha$ are respectively defined by
\begin{equation} \label{7}
\Big(\frac{t}{e_{\lambda}(t)-1}\Big)^{\alpha}e_{\lambda}^{x}(t)=\sum_{n=0}^{\infty}\beta_{n,\lambda}^{(\alpha)}(x)\frac{t^{n}}{n!},\quad \beta_{n,\lambda}^{(\alpha)}=\beta_{n,\lambda}^{(\alpha)}(0),
\end{equation}
for any complex number $\alpha$. \\
When $\alpha=1$, $\beta_{n, \lambda}^{(1)}(x)=\beta_{n, \lambda}(x)$ are called the degenerate Bernoulli polynomials. \par
The degenerate Stirling numbers of the second kind are specified as (see \cite{14,20})
\begin{equation} \label{8}
\begin{aligned}
&\frac{1}{k!}\big(e_{\lambda}(t)-1 \big)^{k}=\sum_{n=k}^{\infty}S_{2,\lambda}(n,k)\frac{t^{n}}{n!},\quad (k \ge 0), \\
&(x)_{n,\lambda}=\sum_{k=0}^{n}S_{2,\lambda}(n,k)(x)_{k},\quad(n \ge 0),
\end{aligned}
\end{equation}
while the degenerate Stirling numbers of the first kind are defined as (see \cite{12,20})
\begin{equation} \label{9}
\begin{aligned}
&\frac{1}{k!}\big(\log_{\lambda}(1+t)\big)^{k}=\sum_{n=k}^{\infty}S_{1,\lambda}(n,k)\frac{t^{n}}{n!}, \quad (k \ge 0),\\
&(x)_{n}=\sum_{k=0}^{n}S_{1,\lambda}(n,k)(x)_{k,\lambda},\quad (n \ge 0).
\end{aligned}
\end{equation}
Here $\log_{\lambda}(1+t)$ is called the degenerate logarithm, which is the compositional inverse of $e_{\lambda}(t)-1$, and given by
\begin{equation} \label{10}
\log_{\lambda}(1+t)=\frac{1}{\lambda}((1+t)^{\lambda}-1)=\sum_{n=1}^{\infty}(\lambda-1)_{n-1}\frac{t^{n}}{n!}.
\end{equation}
Taking the limits $\lambda \rightarrow 0$, we find that (see \eqref{5}, \eqref{6}, \eqref{7}, \eqref{8}, \eqref{9}, \eqref{10})
\begin{align*}
&(x)_{n,\lambda} \rightarrow x^{n}, \quad e_{\lambda}^{x}(t) \rightarrow e^{xt}, \quad e_{\lambda}(t) \rightarrow e^{t},\quad \log_{\lambda}(1+t) \rightarrow \log (1+t), \\
&\beta_{n,\lambda}^{(\alpha)}(x) \rightarrow B_{n}^{(\alpha)}(x), \quad S_{1,\lambda}(n,k) \rightarrow S_{1}(n,k), \quad 
S_{2,\lambda}(n,k) \rightarrow S_{2}(n,k).
\end{align*} \par
A formal power series $f(t)=\sum_{n=0}^{\infty}p_{n}\frac{t^{n}}{n!}$ is a delta series if $f(0)=0$ and $f^{\prime}(0) \ne 0$. If $f(t)$ is a delta series, then it has a compositional inverse $\bar{f}(t)$, which is also a delta series, satisfying $f(\bar{f}(t))=t=\bar{f}(f(t))$. Let the sequence of polynomials, $\left\{p_{n}(x) \right\}_{n=0}^{\infty}$, or $p_{n}(x)$ for short, be given by the generating function
\begin{equation} \label{11}
\sum_{n=0}^{\infty} p_{n}(x)\frac{t^{n}}{n!}=e^{x \bar{f}(t)}.
\end{equation}
Then $p_{n}(x)$ are called the sequence associated with $f(t)$, which is indicated by $p_{n}(x) \sim (1, f(t))$.
We denote the sequence of polynomials by $\bold{P}=\left\{p_{n}(x) \right\}_{n=0}^{\infty}$ or $\bold{P}=\left\{p_{n}(x) \right\}$, for brevity. We also let
\begin{equation} \label{12}
\sum_{n=0}^{\infty} p_{n}\frac{t^{n}}{n!}=e^{\bar{f}(t)},\quad p_{n}=p_{n}(1).
\end{equation} \par
We recall the following three Lagrange inversion formulas: for any formal power series $g(t)$ and any delta series $f(t)$, they are stated as (see \cite{5,25}):
\begin{flalign*}
&(A)\,\,\,[t^{n}]\,g(\bar{f}(t))=\frac{1}{n}[t^{n-1}]\,g^{\prime}(t)\Big(\frac{t}{f(t)}\Big)^{n}, \\
&(B)\,\,\,[t^{n}]\,\big(\bar{f}(t)\big)^{k}=\frac{k}{n}[t^{n-k}]\, \Big(\frac{t}{f(t)}\Big)^{n}, \\
&(C)\,\,\,[t^{n}]\,\bar{f}(t)=\frac{1}{n}[t^{n-1}]\,\Big(\frac{t}{f(t)}\Big)^{n}, &&
\end{flalign*}
where $\bar{f}(t)$ is the compositional inverse of $f(t)$, $n$ is any nonnegative integer and $k$ is any positive integer. \\
Applying the formula (C) to $f(t)=e_{\lambda}(t)-1$, we arrive at
\begin{equation} \label{13}
[t^{n}]\, \log_{\lambda}(1+t)=\frac{1}{n}[t^{n-1}]\,\Big(\frac{t}{e_{\lambda}(t)-1} \Big)^{n}=\frac{1}{n!}\beta_{n-1,\lambda}^{(n)}.
\end{equation}
We see from \eqref{9}, \eqref{10}, and \eqref{13} that
\begin{equation} \label{14}
\log_{\lambda}(1+t)=\sum_{n=1}^{\infty}S_{1,\lambda}(n,1)\frac{t^{n}}{n!}=\sum_{n=1}^{\infty}(\lambda-1)_{n-1}\frac{t^{n}}{n!}=\sum_{n=1}^{\infty}\beta_{n-1,\lambda}^{(n)}\frac{t^{n}}{n!}.
\end{equation}
In addition, applying the formula (B) to $f(t)=e_{\lambda}(t)-1$ gives
\begin{equation} \label{15}
[t^{n}]\,\big(\log_{\lambda}(1+t)\big)^{k}=\frac{k}{n}\beta_{n-k,\lambda}^{(n)}\frac{1}{(n-k)!}.
\end{equation}
Thus it follows from \eqref{15} that 
\begin{equation} \label{16}
\frac{1}{k!}\big(\log_{\lambda}(1+t)\big)^{k}=\sum_{n=k}^{\infty}\binom{n-1}{k-1}\beta_{n-k,\lambda}^{(n)}\frac{t^{n}}{n!},\quad (k \ge 0).
\end{equation}
Now, one observes from \eqref{9} and \eqref{16} that
\begin{equation} \label{17}
S_{1,\lambda}(n,k)=\binom{n-1}{k-1}\beta_{n-k,\lambda}^{(n)}, \quad (n \ge k).
\end{equation}
Taking the limits $\lambda \rightarrow 0$ in \eqref{14} and \eqref{17} yields
\begin{equation} \label{18}
S_{1}(n,1)=(-1)^{n-1}(n-1)!=B_{n-1}^{(n)},\quad S_{1}(n,k)=\binom{n-1}{k-1}B_{n-k}^{(n)}.
\end{equation} \par
The partial Bell polynomials are characterized by
\begin{equation*}
B_{n,k}(x_{1},x_{2},\dots,x_{n-k+1})=\sum_{\substack{j_{1}+j_{2}+\cdots=k \\ j_{1}+2j_{2}+\dots=n}}\frac{n!}{j_{1}!j_{2}!\cdots}\Big(\frac{x_{1}}{1!} \Big)^{j_{1}}\Big(\frac{x_{2}}{2!}\Big)^{j_{2}}\cdots,
\end{equation*}
where $j_{1}, j_{2}, \dots$ are nonnegative integers. The generating function of the partial Bell polynomials is specified as
\begin{equation} \label{19}
\frac{1}{k!}\bigg(\sum_{m=1}^{\infty}x_{m}\frac{t^{m}}{m!}\bigg)^{k}=\sum_{n=k}^{\infty}B_{n,k}(x_{1},x_{2},\dots,x_{n-k+1})\frac{t^{n}}{n!}, \quad (k \ge 0).
\end{equation} 
General references for this paper include \cite{5,6,23,24,25,26,27}. \par

Let $\mathbf{P}=\{p_{n}(x)\}_{n=0}^{\infty}$ be a sequence of polynomials such that $\deg p_{n}(x)=n$ and $p_{0}(x)=1$. In \cite{7}, the authors introduced Stirling numbers of the first and second kind associated with $\mathbf{P}$, defined respectively by the relations:
\begin{equation} \label{19-1}
(x)_{n} = \sum_{k=0}^{n}S_{1}(n,k; \mathbf{P})p_{k}(x), \quad
p_{n}(x) = \sum_{k=0}^{n}S_{2}(n,k; \mathbf{P})(x)_{k}.
\end{equation}
These numbers satisfy fundamental orthogonality and inverse relations, generalizing the classical Stirling numbers. \par
Let $Y$ be a random variable whose moment generating function exists in some neighborhood of the origin. Recently, Adell and Lekuona introduced the probabilistic Stirling numbers of the second kind associated with $Y$, denoted by $S_{2}^{Y}(n,k)$ \cite{3}. This was followed by Adell and B\'{e}nyi's definition of the probabilistic Stirling numbers of the first kind, $s_{Y}(n,k)$, based on the cumulant generating function \cite{1}. While these initial formulations provided a foundational framework, they lacked the standard orthogonality properties necessary for broader application in inversion formulas. In addition, the definition of $s_{Y}(n,k)$ was not designed to recover the classical Stirling numbers of the first kind, $S_{1}(n,k)$, in the deterministic case where $Y=1$. \par 
To extend the utility of these concepts, a refined version of the probabilistic Stirling numbers of the first kind, $S_{1}^{Y}(n,k)$, was proposed in \cite{8,28}. This updated definition ensures that $S_{1}^{Y}(n,k)$ and $S_{2}^{Y}(n,k)$ form a consistent pair satisfying the expected orthogonality and inverse relations, while also maintaining compatibility with the classical $S_{1}(n,k)$ when $Y=1$. Building on this refined approach, their degenerate counterparts--the probabilistic degenerate Stirling numbers of the second kind $S_{2,\lambda}^{Y}(n,k)$ and first kind $S_{1,\lambda}^{Y}(n,k)$--were established in \cite{8}. These variants similarly preserve the requisite orthogonality and inverse properties, successfully reducing to the degenerate Stirling numbers $S_{1,\lambda}(n,k)$ under the condition $Y=1$. Additionally, Adell and B\'{e}nyi \cite{2} generalized these to $B$-Stirling numbers associated with potential polynomials; however, these variants still lack the desired orthogonality properties. \par
In this paper, we investigate the Stirling numbers associated with a delta series $f(t)$ (i.e., $f(0)=0$ and $f^{\prime}(0) \ne 0$), denoted by $S_{2}(n,k; f(t))$ and $S_{1}(n,k; f(t))$, which unify probabilistic Stirling and $B$-Stirling numbers. They correspond to the Stirling numbers associated with polynomial sequence $\mathbf{P}= \{p_{n}(x)\}_{n=0}^{\infty}$, defined by the generating function $e^{x \bar{f}(t)}=\sum_{n=0}^{\infty}p_{n}(x)\frac{t^{n}}{n!}$, where $\bar{f}(t)$ is the compositional inverse of $f(t)$ (see \eqref{19-1}). They also correspond to the $B$-Stirling numbers with `B' given by $B(t)=e^{\bar{f}(t)}$ (see \cite{2}). Furthermore, the probabilistic Stirling numbers are the Stirling numbers associated with the delta series $f(t)$, whose compositional inverse is given by $\bar{f}(t)=\log E[e_{\lambda}^{Y}(t)]$ (see Example (a) in Section 3).
These Stirling numbers satisfy orthogonality and inverse relations. As $\mathbf{P}= \{p_{n}(x)\}_{n=0}^{\infty}$ determines $f(t)$ and vice versa, $S_{2}(n,k; f(t))$ and $S_{1}(n,k; f(t))$ are also denoted respectively by $S_{2}(n,k; \mathbf{P})$ and $S_{1}(n,k; \mathbf{P})$. This correspondence extends to all related quantities whenever $\mathbf{P}$ and $f(t)$ are related by the generating function $e^{x \bar{f}(t)}=\sum_{n=0}^{\infty}p_{n}(x)\frac{t^{n}}{n!}$.
We define the logarithm associated with $f(t)$, $\log_{f(t)}(1+t)$, which is given by
\begin{equation*}
\log_{f(t)}(1+t)=f(\log(1+t))=\sum_{n=1}^{\infty}S_{1}(n,1; f(t))\frac{t^{n}}{n!}=\sum_{n=1}^{\infty}B_{n-1,\bar{f}(t)}^{(n)}\frac{t^{n}}{n!},
\end{equation*}
where $B_{n,f(t)}^{(\alpha)}(x)$ denotes the Bernoulli polynomials of order $\alpha$ associated with $f(t)$, defined by
\begin{equation*}
\left(\frac{t}{e^{f(t)}-1}\right)^{\alpha}e^{xf(t)}=\sum_{n=0}^{\infty}B_{n,f(t)}^{(\alpha)}(x)\frac{t^{n}}{n!}.
\end{equation*}
It is worth noting that the generating function of $S_{1}(n,k; f(t))$ takes the form
\begin{equation*}
\frac{1}{k!}\big(\log_{f(t)}(1+t)\big)^{k}=\sum_{n=k}^{\infty}S_{1}(n,k; f(t))\frac{t^{n}}{n!}.
\end{equation*}
We derive a Schl\"{o}milch-type formula for these Stirling numbers. In turn, this gives an alternative expression for the logarithm associated with $f(t)$:
\begin{equation*}
\log_{f(t)}(1+t)=\sum_{n=1}^{\infty}\sum_{j=0}^{n-1}\binom{2n-1}{n-1-j}(-1)^{j}\left(\bar{f}^{\prime}(0)\right)^{-n-j}S_{2}(n-1+j,j; f(t))\frac{t^{n}}{n!}.
\end{equation*}
In Section 3, we provide fifteen concrete examples to illustrate our results, drawing primarily from \cite{7} and \cite{9}. \par

\section{Logarithms and Stirling numbers associated with delta series}
Let $f(t)$ be a delta series. So $f(0)=0$, and  $f^{\prime}(0) \ne 0$. Here we introduce the logarithm $\log_{f(t)}(1+t)$, called the logarithm associated with $f(t)$. It is also denoted by $\log_{\mathbf{P}}(1+t)$ and called the logarithm associated with $\mathbf{P}$. Here $\mathbf{P}=\left\{p_{n}(x)\right\}_{n=0}^{\infty}$, with $p_{n}(x) \sim (1, f(t))$. Thus we have (see \eqref{11})
\begin{equation*}
\sum_{n}^{\infty}p_{n}(x)\frac{t^{n}}{n!}=e^{x\bar{f}(t)}.
\end{equation*}
We also recall that (see \eqref{12})
\begin{equation} \label{20}
\sum_{n}^{\infty}p_{n}\frac{t^{n}}{n!}=e^{\bar{f}(t)},\quad \mathrm{with}\,\, p_{n}=p_{n}(1).
\end{equation}
Note here that the coefficient of the linear term of \eqref{20} is given by $p_{1}=\bar{f}^{\prime}(0)$. \par
We define the {\it{Bernoulli polynomials of order $\alpha$ associated with $f(t)$ or $\mathbf{P}$}} by
\begin{equation} \label{21}
\Big(\frac{t}{e^{f(t)}-1}\Big)^{\alpha}e^{xf(t)}=\sum_{n=0}^{\infty}B_{n,f(t)}^{(\alpha)}(x)\frac{t^{n}}{n!},
\end{equation}
for any complex number $\alpha$. \\
If $f(t)=t$, then $B_{n,f(t)}^{(\alpha)}(x)=B_{n}^{(\alpha)}(x)$ are the classical Bernoulli polynomials of order $\alpha$ (see \eqref{1}); if $f(t)=\frac{1}{\lambda}\log(1+\lambda t)$, then $B_{n,f(t)}^{(\alpha)}(x)=\beta_{n}^{(\alpha)}(x)$ are the degenerate Bernoulli polynomials of order $\alpha$ (see \eqref{7}). \\
When $\alpha=1$, they are called the {\it{Bernoulli polynomials associated with $f(t)$ or $\mathbf{P}$}} and given by
\begin{equation*}
\frac{t}{e^{f(t)}-1}e^{xf(t)}=\sum_{n=0}^{\infty}B_{n,f(t)}(x)\frac{t^{n}}{n!}.
\end{equation*}

The Stirling numbers of the second kind associated with $f(t)$ or $\mathbf{P}$, denoted by either $S_{2}(n,k; f(t))$ or $S_{2}(n,k; \mathbf{P})$, are defined by
\begin{equation} \label{22}
\frac{1}{k!}\big(e^{\bar{f}(t)}-1\big)^{k}=\sum_{n=k}^{\infty}S_{2}(n,k; f(t))\frac{t^{n}}{n!}=\sum_{n=k}^{\infty}S_{2}(n,k; \mathbf{P})\frac{t^{n}}{n!}.
\end{equation}
If $f(t)=t$, then $S_{2}(n,k; f(t))=S_{2}(n,k)$ are the ordinary Stirling numbers of the second (see \eqref{2}); if $f(t)=\frac{1}{\lambda}(e^{\lambda t}-1)$, then $S_{2}(n,k; f(t))=S_{2,\lambda}(n,k)$ are the degenerate Stirling numbers of the second kind (see \eqref{8}). \\
The Bell polynomials associated with $f(t)$, denoted by $\mathrm{Bel}_{n,f(t)}(x)$, are defined by
\begin{equation*}
e^{x (e^{\bar{f}(t)}-1)}=\sum_{n=0}^{\infty}\mathrm{Bel}_{n, f(t)}(x)\frac{t^{n}}{n!}, \quad \mathrm{Bel}_{n, f(t)}(x)=\sum_{n=0}^{n}S_{2}(n,k; f(t))x^{k}.
\end{equation*} \par
The Stirling numbers of the first kind associated with $f(t)$ or $\mathbf{P}$, denoted by either $S_{1}(n,k; f(t))$ or $S_{1}(n,k; \mathbf{P})$, are defined by
\begin{equation} \label{23}
\frac{1}{k!}\big(\bar{e}_{f(t)}(t)\big)^{k}=\sum_{n=k}^{\infty}S_{1}(n,k; f(t))\frac{t^{n}}{n!}=\sum_{n=k}^{\infty}S_{1}(n,k; \mathbf{P})\frac{t^{n}}{n!},
\end{equation}
where $\bar{e}_{f(t)}(t)$ is the compositional inverse of $e_{f(t)}(t)=e^{\bar{f}(t)}-1$. Here we note that $e_{f(t)}(t)$ is a delta series, since $e_{f(t)}(0)=0$, $e_{f(t)}^{\prime}(0)=\bar{f}^{\prime}(0) \ne 0$. \\
If $f(t)=t$, then $S_{1}(n,k; f(t))=S_{1}(n,k)$ are the ordinary Stirling numbers of the first kind (see \eqref{3}); if $f(t)=\frac{1}{\lambda}(e^{\lambda t}-1)$, then $S_{1}(n,k; f(t))=S_{1,\lambda}(n,k)$ are the degenerate Stirling numbers of the first kind (see \eqref{9}). \par
It is shown in \cite{7} that, for $p_{n}(x) \sim (1, f(t))$, they are equivalently characterized by (see \eqref{4})\\
\begin{align}
&p_{n}(x)=\sum_{k=0}^{n}S_{2}(n,k; f(t))(x)_{k}, \label{24} \\
&(x)_{n}=\sum_{k=0}^{n}S_{1}(n,k; f(t))p_{n}(x). \label{25}
\end{align}
Among other things, from \eqref{24} and \eqref{25}, we note that $S_{1}(n,k; f(t))$ and $S_{2}(n,k; f(t))$ satisfy the orthogonality and inverse relations.
\begin{proposition}
The following orthogonality and inverse relations are valid for $S_{1}(n,k; f(t))$ and $S_{2}(n,k; f(t))$.
\begin{flalign*}
&(a)\,\, \,\sum_{k=l}^{n} S_{2}(n,k; f(t))S_{1}(k,l; f(t))=\delta_{n,l}, \quad \sum_{k=l}^{n} S_{1}(n,k; f(t))S_{2}(k,l; f(t))=\delta_{n,l}, \\
&(b)\,\, a_{n}=\sum_{k=0}^{n}S_{2}(n,k; f(t)) b_{k}\,\, \iff \,\, b_{n}=\sum_{k=0}^{n}S_{1}(n,k; f(t))a_{k}, \\ 
&(c)\,\, a_{n}=\sum_{k=n}^{m}S_{2}(k,n; f(t))b_{k} \,\, \iff \,\, b_{n}=\sum_{k=n}^{m}S_{1}(k,n; f(t))a_{k}. &&
\end{flalign*}
\end{proposition}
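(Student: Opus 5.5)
The plan is to argue at the level of generating functions, using that $e_{f(t)}(t)=e^{\bar{f}(t)}-1$ and $\bar{e}_{f(t)}(t)$ are mutually inverse delta series. Write $g(t)=e^{\bar f(t)}-1$ and $\bar g(t)=\bar{e}_{f(t)}(t)$. By \eqref{22} and \eqref{23}, $S_{2}(n,k;f(t))$ and $S_{1}(n,k;f(t))$ are the exponential Riordan (Jabotinsky) matrices of $g$ and $\bar g$. The point is that composition of delta series corresponds to multiplication of these lower triangular matrices.

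For (a), I will substitute $t\mapsto g(t)$ in the generating function \eqref{23} for $S_{1}(k,l;f(t))$. This gives
\begin{equation*}
\frac{1}{l!}\big(\bar g(g(t))\big)^{l}=\sum_{k=l}^{\infty}S_{1}(k,l;f(t))\frac{1}{k!}\big(g(t)\big)^{k}.
\end{equation*}
Next I expand each $\frac{1}{k!}g(t)^{k}$ by \eqref{22} and interchange the sums. The interchange is legitimate in formal power series because $g(t)^{k}=O(t^{k})$, so only finitely many terms contribute to each coefficient of $t^{n}$. The right-hand side then becomes
\begin{equation*}
\sum_{n=l}^{\infty}\Big(\sum_{k=l}^{n}S_{2}(n,k;f(t))S_{1}(k,l;f(t))\Big)\frac{t^{n}}{n!}.
\end{equation*}
The left-hand side is $\frac{t^{l}}{l!}$, since $\bar g(g(t))=t$. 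Comparing coefficients of $t^{n}$ gives the first orthogonality relation. The second follows identically by substituting $t\mapsto\bar g(t)$ into \eqref{22} and using $g(\bar g(t))=t$.

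One could alternatively derive (a) from \eqref{24} and \eqref{25}, by substituting one into the other and using that $\{(x)_{k}\}$ and $\{p_{k}(x)\}$ are bases, since $\deg p_{k}=k$ and $\bar f'(0)\neq 0$. I will use the generating-function route because it is self-contained.

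For (b), assume $a_{n}=\sum_{k=0}^{n}S_{2}(n,k;f(t))b_{k}$. Then
\begin{equation*}
\sum_{k=0}^{n}S_{1}(n,k;f(t))a_{k}=\sum_{k=0}^{n}S_{1}(n,k;f(t))\sum_{j=0}^{k}S_{2}(k,j;f(t))b_{j}.
\end{equation*}
I swap the order of summation and apply the second identity of (a); this gives $\sum_{j}\delta_{n,j}b_{j}=b_{n}$. The converse direction is symmetric and uses the first identity of (a).

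For (c), the summation runs over the first index, up to a fixed $m$. Assume $a_{n}=\sum_{k=n}^{m}S_{2}(k,n;f(t))b_{k}$. Then
\begin{equation*}
\sum_{k=n}^{m}S_{1}(k,n;f(t))a_{k}=\sum_{j=n}^{m}b_{j}\sum_{k=n}^{j}S_{2}(j,k;f(t))S_{1}(k,n;f(t)),
\end{equation*}
after reordering the finite double sum over $n\le k\le j\le m$. By the first identity of (a), the inner sum is $\delta_{j,n}$, so the expression equals $b_{n}$. The converse uses the second identity of (a).

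The only step requiring care is the index bookkeeping. The ranges must collapse correctly because $S_{i}(n,k;f(t))=0$ for $k>n$, which follows from $g(t)^{k},\bar g(t)^{k}=O(t^{k})$. I also need to justify the formal substitution of a delta series into a power series in (a). Neither is a real obstacle; the substance is entirely the inverse relationship between $g$ and $\bar g$.
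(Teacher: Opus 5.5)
Your proof is correct. For (b) and (c) it is the standard deduction from (a) that the paper leaves implicit, including the reindexing over $n\le k\le j\le m$.

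For (a) you take a different route from the paper, which gives no computation. The paper reads orthogonality straight off \eqref{24} and \eqref{25}, which it quotes from \cite{7}; this is the alternative you mention. There $S_{2}(n,k;f(t))$ and $S_{1}(n,k;f(t))$ are the connection coefficients between the bases $\{(x)_{n}\}$ and $\{p_{n}(x)\}$. Substituting one expansion into the other and comparing coefficients in a basis gives the two identities. (In \eqref{25} as printed, $p_{n}(x)$ should read $p_{k}(x)$.)

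Your argument works directly from the definitions \eqref{22}, \eqref{23} and the identities $\bar g(g(t))=t=g(\bar g(t))$. It needs neither the characterization from \cite{7} nor the linear independence of $\{p_{k}(x)\}$, which relies on $\deg p_{k}(x)=k$, i.e.\ on $\bar{f}^{\prime}(0)\neq 0$. It also shows the result is an instance of a general fact: the Jabotinsky matrices of mutually inverse delta series are inverse matrices.

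The paper's route is shorter once \eqref{24} and \eqref{25} are available. It also carries the conceptual reading of orthogonality as a pair of inverse change-of-basis matrices. The two arguments are really dual forms of one composition identity. This is because \eqref{24} and \eqref{25} are just $e^{x\bar f(t)}=(1+g(t))^{x}$ and $(1+t)^{x}=e^{x\bar f(\bar g(t))}$, expanded in powers of your $g$ and $\bar g$.
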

By using the formula (C) and \eqref{21}, we derive that
\begin{equation} \label{26}
\begin{aligned}
[t^{n}]\,\bar{e}_{f(t)}(t)&=\frac{1}{n}[t^{n-1}]\,\bigg(\frac{t}{e^{\bar{f}(t)}-1}\bigg)^{n} \\
&=\frac{1}{n}[t^{n-1}]\,\sum_{k=0}^{\infty}B_{k,\bar{f}(t)}^{(n)}\frac{t^{k}}{k!}=\frac{1}{n!} B_{n-1,\bar{f}(t)}^{(n)}.
\end{aligned}
\end{equation}
Consequently, from \eqref{26} we have
\begin{equation} \label{27}
\bar{e}_{f(t)}(t)=\sum_{n=1}^{\infty}B_{n-1,\bar{f}(t)}^{(n)}\frac{t^{n}}{n!}
\end{equation}
Also, by using the formula (B), we find that
\begin{equation} \label{28}
\begin{aligned}
[t^{n}]\,\big(\bar{e}_{f(t)}(t)\big)^{k}&=\frac{k}{n}[t^{n-k}]\,\bigg(\frac{t}{e^{\bar{f}(t)}-1}\bigg)^{n}\\
&=\frac{k}{n}[t^{n-k}]\,\sum_{k=0}^{\infty}B_{k,\bar{f}(t)}^{(n)}\frac{t^{k}}{k!}=
\frac{k}{n}\frac{1}{(n-k)!}B_{n-k,\bar{f}(t)}^{(n)}.
\end{aligned}
\end{equation}
Hence from \eqref{28} we arrive at
\begin{equation} \label{29}
\frac{1}{k!}\big(\bar{e}_{f(t)}(t)\big)^{k}=\sum_{n=k}^{\infty}\binom{n-1}{k-1}B_{n-k,\bar{f}(t)}^{(n)}\frac{t^{n}}{n!}.
\end{equation} \par
Now, we define the {\it{logarithm associated with the delta seires $f(t)$ or the associated sequence $\mathbf{P}$}} by
\begin{equation} \label{30}
\log_{f(t)}(1+t)=\log_{\mathbf{P}}(1+t)= f(\log(1+t)).
\end{equation}
Observe that $f(\log(1+t))$ is the compositional inverse of $e_{f(t)}(t)=e^{\bar{f}(t)}-1$.
Thus we see from \eqref{23}, \eqref{27}, and \eqref{30} that 
\begin{equation} \label{31}
\log_{f(t)}(1+t)=f(\log(1+t)=\bar{e}_{f(t)}(t)=\sum_{n=1}^{\infty}S_{1}(n,1; f(t))\frac{t^{n}}{n!}=\sum_{n=1}^{\infty}B_{n-1,\bar{f}(t)}^{(n)}\frac{t^{n}}{n!}.
\end{equation}
From \eqref{23} and \eqref{31}, we note that
\begin{equation} \label{32}
\begin{aligned}
\frac{1}{k!}\big(\log_{f(t)}(1+t)\big)^{k}&=\sum_{n=k}^{\infty}S_{1}(n,k; f(t))\frac{t^{n}}{n!}\\
&=\frac{1}{k!}\bigg(\sum_{m=1}^{\infty}S_{1}(m,1; f(t))\frac{t^{m}}{m!}\bigg)^{k}\\
&=\frac{1}{k!}\bigg(\sum_{m=1}^{\infty}B_{m-1,\bar{f}(t)}^{(m)} \frac{t^{m}}{m!}\bigg)^{k}.
\end{aligned}
\end{equation}
Then, from \eqref{19}, \eqref{23}, \eqref{29}, and \eqref{32}, we obtain the following theorem.
\begin{theorem}
For any integers $n \ge k \ge 0$, we have
\begin{align*}
S_{1}(n,k; f(t))&=\binom{n-1}{k-1}B_{n-k,\bar{f}(t)}^{(n)} \\
&=B_{n,k}\big(S_{1}(1,1; f(t)),\,S_{1}(2,1; f(t)),\,\dots,\,S_{1}(n-k+1,1; f(t))\big) \\
&=B_{n,k}\Big(B_{0,\bar{f}(t)}^{(1)},\,B_{1,\bar{f}(t)}^{(2)},\,\dots\,B_{n-k,\bar{f}(t)}^{(n-k+1)}\Big).
\end{align*}
\end{theorem}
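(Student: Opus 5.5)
The plan is to compare coefficients of $t^{n}/n!$ in the generating function $\frac{1}{k!}\big(\bar{e}_{f(t)}(t)\big)^{k}$, which \eqref{23}, \eqref{29} and \eqref{32} express in three different ways. First, the definition \eqref{23} gives $\frac{1}{k!}\big(\bar{e}_{f(t)}(t)\big)^{k}=\sum_{n\ge k}S_{1}(n,k;f(t))\frac{t^{n}}{n!}$. Second, \eqref{29}, which comes from Lagrange inversion formula (B) applied to the delta series $e_{f(t)}(t)=e^{\bar f(t)}-1$ as in \eqref{28}, gives the same series with coefficients $\binom{n-1}{k-1}B^{(n)}_{n-k,\bar f(t)}$. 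Equating the coefficients of $t^{n}/n!$ yields the first equality for $n\ge k\ge 1$. The arithmetic behind this is $\frac{n!}{k!}\cdot\frac{k}{n}\cdot\frac{1}{(n-k)!}=\binom{n-1}{k-1}$, so \eqref{28} indeed produces \eqref{29}.

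For the Bell polynomial expressions, I will use \eqref{31}: $\bar{e}_{f(t)}(t)=\sum_{m\ge1}S_{1}(m,1;f(t))\frac{t^{m}}{m!}$, and by \eqref{27} this is also $\sum_{m\ge1}B^{(m)}_{m-1,\bar f(t)}\frac{t^{m}}{m!}$. Substituting either expansion into the generating function \eqref{19} of the partial Bell polynomials, with $x_{m}=S_{1}(m,1;f(t))$ or $x_{m}=B^{(m)}_{m-1,\bar f(t)}$, gives $\frac{1}{k!}\big(\bar{e}_{f(t)}(t)\big)^{k}=\sum_{n\ge k}B_{n,k}(x_{1},\dots,x_{n-k+1})\frac{t^{n}}{n!}$, as recorded in \eqref{32}. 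Comparing with \eqref{23} then gives the second and third equalities. The indices match because $x_{m}$ with $m=n-k+1$ is $B^{(n-k+1)}_{n-k,\bar f(t)}$.

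The only real obstacle is the degenerate case $k=0$. Here both sides are $\delta_{n,0}$, since $B_{n,0}=\delta_{n,0}$ and $\frac{1}{0!}(\cdot)^{0}=1$. The binomial $\binom{n-1}{-1}$ must be read as $\delta_{n,0}$, with $B^{(0)}_{0,\bar f(t)}=1$, and I will state this convention explicitly. Apart from that, I will check that formula (B), stated for $k\ge1$, applies to $e_{f(t)}(t)$. This holds because $e_{f(t)}'(0)=\bar f'(0)\neq0$, and $\big(t/(e^{\bar f(t)}-1)\big)^{n}$ is then a genuine power series, which is what makes \eqref{21} with $f$ replaced by $\bar f$ well defined.
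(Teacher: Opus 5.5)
Your proposal is correct and follows the paper's own argument: the theorem is obtained by reading off the coefficient of $t^{n}/n!$ in $\frac{1}{k!}\big(\bar{e}_{f(t)}(t)\big)^{k}$ from \eqref{23}, \eqref{29}, and \eqref{32}, with \eqref{19} supplying the partial Bell polynomial form. Your explicit treatment of $k=0$, reading $\binom{n-1}{-1}$ as $\delta_{n,0}$, makes precise a convention the paper leaves implicit.
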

To derive yet another expression for the logarithm associated with $f(t)$, we need to prove a Schl\"omilch-type formula for the Stirling numbers associated with $f(t)$.
The next two lemmas and Theorem 2.7 can be proved just as those in Lemma 2.1, Lemma 2.2 and Theorem 2.1 of \cite{28}. However, we give brief sketches of the proofs. The details are left to the reader.
\begin{lemma}
Let $f(t)$ be a delta series, with $e^{\bar{f}(t)}=\sum_{n=0}^{\infty}p_{n}\frac{t^{n}}{n!}$. For any integers $n \ge k \ge 0$, we have
\begin{align*}
B_{n,k}&\Big(\frac{p_{2}}{2},\frac{p_{3}}{3},\dots,\frac{p_{n-k+2}}{n-k+2}\Big) \\
&=\sum_{j=0}^{k}\binom{n+k}{k-j}\frac{n!}{(n+k)!}\big(-p_{1})^{k-j}S_{2}(n+j,j; f(t)).
\end{align*}
\begin{proof}
On the one hand, we observe that
\begin{align*}
\frac{1}{k!}\Big(e^{\bar{f}(t)}-1-p_{1}t\Big)^{k}&=\frac{1}{k!}t^{k}\Big(\sum_{n=1}^{\infty}\frac{p_{n+1}}{n+1}\frac{t^{n}}{n!}\Big)^{k} \\
&=\sum_{n=k}^{\infty}\frac{t^{n+k}}{n!}B_{n,k}\Big(\frac{p_{2}}{2},\frac{p_{3}}{3},\dots,\frac{p_{n-k+2}}{n-k+2}\Big).
\end{align*}
On the other hand, we also find that
\begin{align*}
\frac{1}{k!}\Big(e^{\bar{f}(t)}-1-p_{1}t\Big)^{k}&=\sum_{j=0}^{k}\frac{1}{(k-j)!}(-p_{1}t)^{k-j}\frac{1}{j!}\big(e^{\bar{f}(t)}-1\big)^{j} \\
&=\sum_{j=0}^{k}\frac{1}{(k-j)!}(-p_{1}t)^{k-j}\sum_{n=0}^{\infty}S_{2}(n+j,j; f(t))\frac{t^{n+j}}{(n+j)!} \\
&=\sum_{n=0}^{\infty}\frac{t^{n+k}}{n!}\sum_{j=0}^{k}\binom{n+k}{k-j}\frac{n!}{(n+k)!}(-p_{1})^{k-j}S_{2}(n+j,j; f(t)),
\end{align*}
where we used $\frac{n!}{(k-j)!(n+j)!}=\binom{n+k}{k-j}\frac{n!}{(n+k)!}$.
\end{proof}
\end{lemma}

\begin{lemma}
Let $f(t)$ be a delta series, with $e^{\bar{f}(t)}=\sum_{n=0}^{\infty}p_{n}\frac{t^{n}}{n!}$. For any integer $n \ge 0$, we have
\begin{align*}
B_{n,\bar{f}(t)}^{(\alpha)}=\sum_{k=0}^{n}(-\alpha)_{k}p_{1}^{-\alpha-k}B_{n,k}\Big(\frac{p_{2}}{2},\frac{p_{3}}{3},\dots,\frac{p_{n-k+2}}{n-k+2}\Big). \\
\end{align*}
\begin{proof}
We proceed as follows:
\begin{align*}
\sum_{n=0}^{\infty}B_{n,\bar{f}(t)}^{(\alpha)}\frac{t^{n}}{n!}&=\Big(\frac{e^{\bar{f}(t)}-1}{t}\Big)^{-\alpha}=p_{1}^{-\alpha}\Big(1+\sum_{n=1}^{\infty}\frac{p_{n+1}}{p_1}\frac{t^{n}}{(n+1)!}\Big)^{-\alpha} \\
&=\sum_{k=0}^{\infty}(-\alpha)_{k}p_{1}^{-\alpha-k}\frac{1}{k!}\Big(\sum_{n=1}^{\infty}\frac{p_{n+1}}{n+1}\frac{t^{n}}{n!}\Big)^{k} \\
&=\sum_{n=0}^{\infty}\frac{t^{n}}{n!}\sum_{k=0}^{n}(-\alpha)_{k}p_{1}^{-\alpha-k}B_{n,k}\Big(\frac{p_{2}}{2},\frac{p_{3}}{3},\dots,\frac{p_{n-k+2}}{n-k+2}\Big).
\end{align*}
\end{proof}
\end{lemma}
Combining the above two lemmas, recalling $p_{1}=\bar{f}^{\prime}(0)$ and noting $\frac{n!k!}{(n+k)!}\binom{n+k}{k-j}=\binom{k}{j}\binom{n+j}{j}^{-1}$ , we arrive at the next result.
\begin{theorem}
Let $f(t)$ be a delta series. For any integer $n \ge 0$, the following identity holds true.
\begin{equation*}
B_{n,\bar{f}(t)}^{(\alpha)}=\sum_{k=0}^{n}\sum_{j=0}^{k}\binom{\alpha+k-1}{k}\binom{k}{j}\binom{n+j}{j}^{-1}(-1)^{j}\big(\bar{f}^{\prime}(0)\big)^{-\alpha-j}S_{2}(n+j,j; f(t)).
\end{equation*}
\end{theorem}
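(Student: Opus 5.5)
The plan is to substitute Lemma 2.4 into Lemma 2.3 and then simplify the coefficients. Start from Lemma 2.3:
\begin{equation*}
B_{n,\bar{f}(t)}^{(\alpha)}=\sum_{k=0}^{n}(-\alpha)_{k}\,p_{1}^{-\alpha-k}\,B_{n,k}\Big(\frac{p_{2}}{2},\dots,\frac{p_{n-k+2}}{n-k+2}\Big).
\end{equation*}
For each $k$ with $0\le k\le n$, replace the partial Bell polynomial by the expression from Lemma 2.4,
\begin{equation*}
\sum_{j=0}^{k}\binom{n+k}{k-j}\frac{n!}{(n+k)!}(-p_{1})^{k-j}S_{2}(n+j,j; f(t)).
\end{equation*}
This gives a double sum over $0\le j\le k\le n$, and the remaining work is to simplify its coefficients.

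There are three bookkeeping steps. First, the powers of $p_1$ combine as $p_{1}^{-\alpha-k}(-p_{1})^{k-j}=(-1)^{k-j}p_{1}^{-\alpha-j}$. Second, the falling factorial becomes a binomial via $(-\alpha)_{k}=(-1)^{k}\alpha(\alpha+1)\cdots(\alpha+k-1)=(-1)^{k}k!\binom{\alpha+k-1}{k}$. The two sign factors then combine to $(-1)^{2k-j}=(-1)^{j}$.

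Third, the factor $k!$ just produced pairs with $\frac{n!}{(n+k)!}\binom{n+k}{k-j}$. The paper's remark states $\frac{n!k!}{(n+k)!}\binom{n+k}{k-j}=\binom{k}{j}\binom{n+j}{j}^{-1}$, and I will verify it directly:
\begin{equation*}
\frac{n!\,k!}{(k-j)!\,(n+j)!}=\frac{k!}{j!\,(k-j)!}\cdot\frac{n!\,j!}{(n+j)!}.
\end{equation*}

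Collecting these, the coefficient of $S_{2}(n+j,j;f(t))$ is
\begin{equation*}
\binom{\alpha+k-1}{k}\binom{k}{j}\binom{n+j}{j}^{-1}(-1)^{j}p_{1}^{-\alpha-j}.
\end{equation*}
Substituting $p_{1}=\bar{f}^{\prime}(0)$, noted after \eqref{20}, gives exactly the stated identity.

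The only delicate point is the sign and factorial bookkeeping, in particular checking that $(-1)^{k}$ from $(-\alpha)_k$ cancels against $(-1)^{k-j}$ to leave $(-1)^j$. The identity is polynomial in $\alpha$, so no convergence issues arise.
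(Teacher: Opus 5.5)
Your proof is correct and follows the paper's argument exactly: substitute the expression for $B_{n,k}\big(\frac{p_2}{2},\dots,\frac{p_{n-k+2}}{n-k+2}\big)$ in terms of $S_2(n+j,j;f(t))$ into the expansion of $B^{(\alpha)}_{n,\bar{f}(t)}$, rewrite $(-\alpha)_k=(-1)^k k!\binom{\alpha+k-1}{k}$, use $\frac{n!\,k!}{(n+k)!}\binom{n+k}{k-j}=\binom{k}{j}\binom{n+j}{j}^{-1}$, and set $p_1=\bar{f}'(0)$. The only slip is in the labels: in the paper's numbering, Lemma 2.3 is the identity expressing $B_{n,k}\big(\frac{p_2}{2},\dots\big)$ through $S_2(n+j,j;f(t))$, and Lemma 2.4 is the expansion of $B^{(\alpha)}_{n,\bar{f}(t)}$, which is the reverse of how you cite them.
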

For $\alpha=1$, we obtain the following expression from Theorem 2.5.
\begin{corollary}
Let $f(t)$ be a delta series. For any integer $n \ge 0$, the following identity holds true.
\begin{equation*}
B_{n,\bar{f}(t)}=\sum_{j=0}^{n}\binom{n+1}{j+1}\binom{n+j}{j}^{-1}(-1)^{j}\big(\bar{f}^{\prime}(0)\big)^{-1-j}S_{2}(n+j,j; f(t)).
\end{equation*}
\end{corollary}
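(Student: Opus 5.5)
The plan is to specialize Theorem 2.5 (the Schl\"omilch-type identity for $B_{n,\bar{f}(t)}^{(\alpha)}$) to $\alpha=1$ and then collapse the double sum into a single sum over $j$ by exchanging the order of summation. Putting $\alpha=1$ in Theorem 2.5, the factor $\binom{\alpha+k-1}{k}$ becomes $\binom{k}{k}=1$ for every $k$. The power $\big(\bar{f}^{\prime}(0)\big)^{-\alpha-j}$ becomes $\big(\bar{f}^{\prime}(0)\big)^{-1-j}$. Thus
\begin{equation*}
B_{n,\bar{f}(t)}=\sum_{k=0}^{n}\sum_{j=0}^{k}\binom{k}{j}\binom{n+j}{j}^{-1}(-1)^{j}\big(\bar{f}^{\prime}(0)\big)^{-1-j}S_{2}(n+j,j; f(t)).
\end{equation*}
Here $B_{n,\bar{f}(t)}=B_{n,\bar{f}(t)}^{(1)}$ is the value at $x=0$ of the Bernoulli polynomial associated with $\bar{f}(t)$, which is consistent with the definition \eqref{21}.

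Next I will interchange the two summations. The range $0\le j\le k\le n$ can be rewritten as $0\le j\le n$, $j\le k\le n$. Only the factor $\binom{k}{j}$ depends on $k$, so the inner sum becomes $\sum_{k=j}^{n}\binom{k}{j}$. By the hockey-stick identity,
\begin{equation*}
\sum_{k=j}^{n}\binom{k}{j}=\binom{n+1}{j+1}.
\end{equation*}
This yields exactly the stated formula.

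None of the steps is a real obstacle; the argument is pure bookkeeping. The only points needing care are checking that $\binom{\alpha+k-1}{k}$ really equals $1$ at $\alpha=1$ for all $k\ge 0$, including $k=0$, and justifying the hockey-stick identity. For the latter I would either cite it or prove it by a quick induction on $n$ using Pascal's rule $\binom{n+1}{j+1}+\binom{n+1}{j}=\binom{n+2}{j+1}$.
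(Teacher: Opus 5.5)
Your proposal is correct and follows the paper's own route: set $\alpha=1$ in Theorem 2.5 so that $\binom{\alpha+k-1}{k}=1$, interchange the order of summation, and collapse $\sum_{k=j}^{n}\binom{k}{j}$ to $\binom{n+1}{j+1}$ by the hockey-stick identity. The paper states only ``For $\alpha=1$, we obtain the following expression from Theorem 2.5,'' and your write-up fills in exactly those implicit steps.
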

Again, the following Schl\"omilch-type formula for the Stirling numbers associated with $f(t)$ is obtained by proceeding just as in the proof of Theorem 3.1 of \cite{28}. Nevertheless, we give the short proof. For this, we need the following identities:
\begin{equation} \label{33}
\begin{aligned}
&\sum_{i=j}^{n-k}\binom{n+i-1}{i}\binom{i}{j}=\frac{n}{n+j}\binom{2n-k}{n}\binom{n-k}{j}, \\
&\binom{n-1}{k-1}\binom{2n-k}{n}\frac{n}{n+j}\binom{n-k}{j}\binom{n-k+j}{j}^{-1}=\binom{n+j-1}{n+j-k}\binom{2n-k}{n-k-j}.
\end{aligned}
\end{equation}
The second identity in \eqref{33} is straightforward. The first identity in \eqref{33} follows by using $\sum_{i=j}^{m}\binom{i}{j}=\binom{m+1}{j+1}$ and noting that
\begin{equation*}
\sum_{i=j}^{n-k}\frac{n+j}{n}\binom{n+i-1}{i}\binom{i}{j}=\binom{n+j}{j}\sum_{i=n+j-1}^{2n-k-1}\binom{i}{n+j-1}.
\end{equation*}
\begin{theorem}
Let $f(t)$ be a delta series. For any integers $n \ge k \ge 0$, we have
\begin{equation*}
S_{1}(n,k; f(t))=\sum_{j=0}^{n-k}\binom{n+j-1}{n+j-k}\binom{2n-k}{n-k-j}(-1)^{j}\big(\bar{f}^{\prime}(0)\big)^{-n-j}S_{2}(n-k+j,j; f(t)).
\end{equation*}
\end{theorem}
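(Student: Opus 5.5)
We start from Theorem 2.4, $S_{1}(n,k; f(t))=\binom{n-1}{k-1}B_{n-k,\bar{f}(t)}^{(n)}$, and evaluate the Bernoulli number $B_{n-k,\bar{f}(t)}^{(n)}$ by Theorem 2.7 (the preceding Bernoulli theorem) with $\alpha=n$ and with $n-k$ in place of $n$. This gives
\begin{equation*}
S_{1}(n,k; f(t))=\binom{n-1}{k-1}\sum_{i=0}^{n-k}\sum_{j=0}^{i}\binom{n+i-1}{i}\binom{i}{j}\binom{n-k+j}{j}^{-1}(-1)^{j}\big(\bar{f}^{\prime}(0)\big)^{-n-j}S_{2}(n-k+j,j; f(t)).
\end{equation*}
Here the summation index $k$ of Theorem 2.7 has been renamed $i$, and we used $\binom{\alpha+i-1}{i}=\binom{n+i-1}{i}$.

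Next we interchange the order of summation, so that $j$ runs from $0$ to $n-k$ and $i$ runs from $j$ to $n-k$. The factor $S_{2}(n-k+j,j; f(t))$, the power of $\bar{f}^{\prime}(0)$, the sign and $\binom{n-k+j}{j}^{-1}$ depend only on $j$. The inner sum is therefore exactly $\sum_{i=j}^{n-k}\binom{n+i-1}{i}\binom{i}{j}$, and by the first identity in \eqref{33} it equals $\frac{n}{n+j}\binom{2n-k}{n}\binom{n-k}{j}$. The coefficient of $(-1)^{j}(\bar{f}^{\prime}(0))^{-n-j}S_{2}(n-k+j,j;f(t))$ then becomes
\begin{equation*}
\binom{n-1}{k-1}\binom{2n-k}{n}\frac{n}{n+j}\binom{n-k}{j}\binom{n-k+j}{j}^{-1},
\end{equation*}
and the second identity in \eqref{33} collapses this to $\binom{n+j-1}{n+j-k}\binom{2n-k}{n-k-j}$, which is the claimed formula.

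The main obstacle is verifying the identities \eqref{33}, since everything else is bookkeeping. For the first identity, I would use $\binom{n+i-1}{i}\binom{i}{j}\frac{n+j}{n}=\binom{n+j}{j}\binom{n+i-1}{n+j-1}$, which is checked by expanding into factorials. Then the hockey-stick sum $\sum_{i=n+j-1}^{2n-k-1}\binom{i}{n+j-1}=\binom{2n-k}{n+j}$ gives the result, together with the factorial check $\binom{n+j}{j}\binom{2n-k}{n+j}=\binom{2n-k}{n}\binom{n-k}{j}$. The second identity is a direct factorial cancellation.

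Finally, the edge case $k=0$ needs care, because $\binom{n-1}{-1}$ vanishes for $n\ge1$ while $\binom{n+j-1}{n+j}$ also vanishes. So both sides agree for $n\geq 1$, and for $n=k=0$ both sides equal $1$ with the usual conventions.
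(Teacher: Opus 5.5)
Your proposal is correct and follows the paper's proof essentially step for step. You substitute the Bernoulli expansion (the paper's Theorem 2.5, with $\alpha=n$ and $n-k$ in place of $n$) into $S_{1}(n,k;f(t))=\binom{n-1}{k-1}B_{n-k,\bar{f}(t)}^{(n)}$ (the paper's Theorem 2.2), interchange the sums, and collapse the coefficients with the two identities in \eqref{33}, verifying them via the hockey-stick sum exactly as the paper sketches. Your separate handling of $k=0$ and of $n=k=0$ is a small addition the paper omits; it is worthwhile because there $\frac{n}{n+j}$ becomes $0/0$ and $\binom{n-1}{k-1}$ depends on convention.
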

\begin{proof}
From Theorem 2.5, we see that 
\begin{equation} \label{34}
\begin{aligned}
&B_{n-k,\bar{f}(t)}^{(n)}\\
&=\sum_{i=0}^{n-k}\sum_{j=0}^{i}\binom{n+i-1}{i}\binom{i}{j}\binom{n-k+j}{j}^{-1}(-1)^{j}\big(\bar{f}^{\prime}(0)\big)^{-n-j}S_{2}(n-k+j,j; f(t)) \\
&=\sum_{j=0}^{n-k}\binom{n-k+j}{j}^{-1}(-1)^{j}\big(\bar{f}^{\prime}(0)\big)^{-n-j}S_{2}(n-k+j,j; f(t))\sum_{i=j}^{n-k}\binom{n+i-1}{i}\binom{i}{j}.
\end{aligned}
\end{equation}
From Theorem 2.2, \eqref{33}, and \eqref{34}, we note
\begin{align*}
&S_{1}(n,k; f(t))=\sum_{j=0}^{n-k}\binom{n-1}{k-1}\binom{2n-k}{n}\frac{n}{n+j}\binom{n-k}{j}\binom{n-k+j}{j}^{-1}\\
&\quad\quad\quad\quad\quad\quad \times (-1)^{j}\big(\bar{f}^{\prime}(0)\big)^{-n-j}S_{2}(n-k+j,j; f(t)) \\
&=\sum_{j=0}^{n-k} \binom{n+j-1}{n+j-k}\binom{2n-k}{n-k-j}(-1)^{j}\big(\bar{f}^{\prime}(0)\big)^{-n-j}S_{2}(n-k+j,j; f(t)).
\end{align*}
\end{proof}

From \eqref{31} and Theorem 2.7, we obtain another expression for the logarithm in terms of the Stirling numbers of the second kind associated with $f(t)$.
\begin{theorem}
Let $f(t)$ be a delta series. Then we have
\begin{equation*}
\log_{f(t)}(1+t)=\sum_{n=1}^{\infty}\sum_{j=0}^{n-1}\binom{2n-1}{n-1-j}(-1)^{j}\big(\bar{f}^{\prime}(0)\big)^{-n-j}S_{2}(n-1+j,j; f(t))\frac{t^{n}}{n!}.
\end{equation*}
\end{theorem}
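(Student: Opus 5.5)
The plan is to specialize the Schl\"omilch-type formula of Theorem 2.7 to $k=1$ and substitute the result into the expansion \eqref{31}. By \eqref{31},
\begin{equation*}
\log_{f(t)}(1+t)=\bar{e}_{f(t)}(t)=\sum_{n=1}^{\infty}S_{1}(n,1; f(t))\frac{t^{n}}{n!}.
\end{equation*}
So it suffices to identify $S_{1}(n,1; f(t))$ for every $n\ge 1$ with the inner sum in the claimed formula.

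Taking $k=1$ in Theorem 2.7 (valid since $n\ge 1= k$), we get
\begin{equation*}
S_{1}(n,1; f(t))=\sum_{j=0}^{n-1}\binom{n+j-1}{n+j-1}\binom{2n-1}{n-1-j}(-1)^{j}\big(\bar{f}^{\prime}(0)\big)^{-n-j}S_{2}(n-1+j,j; f(t)).
\end{equation*}
The first binomial coefficient $\binom{n+j-1}{n+j-1}$ equals $1$ for every $j\ge 0$ and $n\ge 1$, because the upper index $n+j-1\ge 0$. Hence only $\binom{2n-1}{n-1-j}$ remains, and this is exactly the coefficient appearing in the statement. Substituting this expression for $S_{1}(n,1; f(t))$ into the series above gives the theorem.

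The only step needing care is the edge case $n=1$, $j=0$, where $\binom{0}{0}=1$ and $S_{2}(0,0; f(t))=1$. The formula then gives $S_{1}(1,1; f(t))=(\bar{f}^{\prime}(0))^{-1}$. This agrees with the direct computation: $\bar{e}_{f(t)}$ is the compositional inverse of $e^{\bar{f}(t)}-1$, whose linear coefficient is $p_{1}=\bar{f}^{\prime}(0)$, so the linear coefficient of $\bar{e}_{f(t)}(t)$ is $(\bar{f}^{\prime}(0))^{-1}$.

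Beyond this consistency check there is no real obstacle. All of the substantive work, namely the binomial identities \eqref{33} and the Bernoulli-number expansion of Theorem 2.5, has already been carried out in Theorem 2.7.
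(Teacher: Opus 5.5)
Your proposal is correct and follows the paper's proof: the paper derives this theorem by setting $k=1$ in the Schl\"omilch-type formula (Theorem 2.7), where $\binom{n+j-1}{n+j-1}=1$, and substituting the resulting expression for $S_{1}(n,1; f(t))$ into the expansion \eqref{31}. Your check of the $n=1$ term against the linear coefficient $(\bar{f}^{\prime}(0))^{-1}$ is a consistency check only; the argument does not depend on it.
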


\section{Examples}
In (a)-(o) below, we provide explicit expressions for the following quantities (see \eqref{22}, \eqref{23}, \eqref{31}):
\begin{align*}
&S_{2}(n,k; f(t))=S_{2}(n,k; \mathbf{P}),\quad S_{1}(n,k; f(t))=S_{1}(n,k; \mathbf{P}), \\ &\log_{f(t)}(1+t)=\log_{\mathbf{P}}(1+t),
\end{align*}
where $\mathbf{P}=\{p_{n}(x)\}_{n=0}^{\infty}$\,, with $p_{n}(x) \sim (1, f(t))$.

\noindent(a) In recent years, probabilistic versions of Stirling numbers of both kinds have been explored by several authors (see \cite{1,3,8,21,28}). We assume that $Y$ is a random variable such that the moment-generating function of $Y$
\begin{equation*} 
E[e^{tY}]=\sum_{n=0}^{\infty}E[Y^{n}]\frac{t^{n}}{n!}, \quad (|t|<r)
\end{equation*}
exists for some $r > 0$, where $E$ is the mathematical expectation (see \cite{1,3,26}). We assume further that $E[Y] \ne 0$. \par
We recall that the probabilisitc degenerate Stirling numbers of the second kind associated with $Y$, $S_{2,\lambda}^{Y}(n,k)$, the probabilisitc degenerate Stirling numbers of the first kind associated with $Y$, $S_{1,\lambda}^{Y}(n,k)$, the probabilistic degenerate logarithm associated with $Y$, $\log_{\lambda}^{Y}(1+t)$, and the probabilistic degenerate Bernoulli polynomials of order $\alpha$ associated with $Y$, $\beta_{n,\lambda}^{(Y,\alpha)}(x)$, are respectively given by (see \cite{9,15,22})
\begin{equation} \label{35}
\begin{aligned}
&\frac{1}{k!}(E[e_{\lambda}^{Y}(t)]-1)^{k}=\sum_{n=k}^{\infty}S_{2,\lambda}^{Y}(n,k)\frac{t^{n}}{n!},\quad (k \ge 0), \\
&\frac{1}{k!}\big(\bar{e}_{Y,\lambda}(t)\big)^{k}=\sum_{n=k}^{\infty}S_{1,\lambda}^{Y}(n,k)\frac{t^{n}}{n!}, \quad (k \ge 0), \\
&\log_{\lambda}^{Y}(1+t)=\bar{e}_{Y,\lambda}(t)=\sum_{n=1}^{\infty}S_{1,\lambda}^{Y}(n,1)\frac{t^{n}}{n!}, \\
&\bigg(\frac{t}{E[e_{\lambda}^{Y}(t)]-1}\bigg)^{\alpha}\Big(E[e_{\lambda}^{Y}(t)]\Big)^{x}=\sum_{n=0}^{\infty}\beta_{n,\lambda}^{(Y,\alpha)}(x)\frac{t^{n}}{n!},
\end{aligned}
\end{equation}
where $e_{Y,\lambda}(t)=E[e_{\lambda}^{Y}(t)]-1=E[Y]t+\sum_{n=2}^{\infty}E[(Y)_{n,\lambda}]\frac{t^{n}}{n!}$ is a delta series.
In addition, we observe that
\begin{align*}
\log E[e_{\lambda}^{Y}(t)]&=\log(1+E[e_{\lambda}^{Y}(t)]-1) \\
&=\sum_{k=1}^{\infty}(-1)^{k-1}(k-1)!\frac{1}{k!}\big(E[e_{\lambda}^{Y}(t)]-1\big)^{k} \\
&=\sum_{k=1}^{\infty}(-1)^{k-1}(k-1)!\sum_{n=k}^{\infty}S_{2,\lambda}^{Y}(n,k)\frac{t^{n}}{n!} \\
&=E[Y]t+\sum_{n=2}^{\infty}\sum_{k=1}^{n}(-1)^{k-1}(k-1)!S_{2,\lambda}^{Y}(n,k)\frac{t^{n}}{n!}.
\end{align*}
Thus $\log E[e_{\lambda}^{Y}(t)]$ is a delta series. Now, with $\bar{f}(t)=\log E[e_{\lambda}^{Y}(t)]$, we see that
\begin{equation} \label{36}
e^{\bar{f}(t)}-1=E[e_{\lambda}^{Y}(t)]-1,\quad \bar{e}_{Y,\lambda}(t)=\bar{e}_{f(t)}(t).
\end{equation}
Consequently, we see from \eqref{35} and \eqref{36} that
\begin{align*}
&S_{2}(n,k; f(t))=S_{2,\lambda}^{Y}(n,k), \quad S_{1}(n,k; f(t))=S_{1,\lambda}^{Y}(n,k),\\
&log_{f(t)}(1+t)=\log_{\lambda}^{Y}(1+t), \quad B_{n,\bar{f}(t)}^{(\alpha)}(x)=\beta_{n,\lambda}^{(Y,\alpha)}(x).
\end{align*} \par
In \cite{9}, some explicit expressions of $S_{2,\lambda}^{Y}(n,k),\,\, S_{1,\lambda}^{Y}(n,k)$,\, and $\log_{\lambda}^{Y}(1+t)$ are given for several discrete and continuous random variables $Y$. For example, it is shown that, for the uniform random variable $(Y \sim U[0,1])$ (see \cite{26}),
\begin{align*}
&\log_{\lambda}^{Y}(1+t)=\sum_{n=1}^{\infty}S_{1,\lambda}^{Y}(n,1)\frac{t^{n}}{n!}=\sum_{n=1}^{\infty}2^{n}\sum_{m=0}^{n-1}\binom{n-1}{m} \\
&\times \sum_{j_1+j_2+\cdots+j_n=m}\binom{m}{j_1,j_2,\dots,j_n}A_{2,j_1}A_{2,j_2}\cdots A_{2,j_n}\lambda^{n-m-1}\frac{t^n}{n!},
\end{align*}
where the numers $A_{2,n}$'s are given by (see \cite{10})
\begin{equation*}
\frac{\frac{1}{2!}t^{2}}{e^{t}-1-t}=\sum_{n=0}^{\infty}A_{2,n}\frac{t^{n}}{n!}.
\end{equation*}
(b) Let $\mathbf{P}=\left\{x^{n}\right\}$. Then $x^{n} \sim (1,t)$ (see \eqref{18},\,\cite{5,24,25}). It is evident that 
\begin{align*}
&S_{2}(n,k;\mathbf{P})=S_{2}(n,k),\quad S_{1}(n,k;\mathbf{P})=S_{1}(n,k)=\binom{n-1}{k-1}B_{n-k}^{(n)}, \\
& \log_{\mathbf{P}}(1+t)=\log(1+t)=\sum_{n=1}^{\infty}S_{1}(n,1)\frac{t^n}{n!}=\sum_{n=1}^{\infty}B_{n-1}^{(n)}\frac{t^{n}}{n!}.
\end{align*}
(c) Let $\mathbf{P}=\left\{(x)_{n,\lambda}\right\}$ be the sequence of degenerate falling factorials (see \eqref{6}, \eqref{14}, \eqref{17}). Then $(x)_{n,\lambda} \sim \big(1, f(t)=\frac{1}{\lambda}(e^{\lambda t}-1)\big)$ (see \cite{17,19}). It is easy to see that
\begin{align*}
&S_{2}(n,k;\mathbf{P})=S_{2,\lambda}(n,k),\quad S_{1}(n,k;\mathbf{P})=S_{1,\lambda}(n,k)=\binom{n-1}{k-1}\beta_{n-k,\lambda}^{(n)},\\
&\log_{\mathbf{P}}(1+t)=\log_{\lambda}(1+t)=\sum_{n=1}^{\infty}S_{1,\lambda}(n,1)\frac{t^{n}}{n!}=\sum_{n=1}^{\infty}\beta_{n-1,\lambda}^{(n)}\frac{t^{n}}{n!}.
\end{align*}
(d) Let $\mathbf{P}=\left\{\langle{x \rangle}_{n}\right\}$ be the sequence of rising factorials which are given by (see \cite{5,11,24,25})
\begin{equation*}
\langle x \rangle_{0}=1,\quad \langle x \rangle_{n}=x(x+1)\cdots(x+n-1),\quad(n \ge 1).
\end{equation*}
Then $\langle{x \rangle}_{n} \sim (1, 1-e^{-t})$. We show that
\begin{align*}
&S_{2}(n,k;\mathbf{P})=L(n,k), \quad S_{1}(n,k;\mathbf{P})=(-1)^{n-k}L(n,k), \\
&\log_{\mathbf{P}}(1+t)=\sum_{n=1}^{\infty}(-1)^{n-1}L(n,1)\frac{t^{n}}{n!}=\frac{t}{1+t},
\end{align*}
where the (unsigned) Lah numbers are specified as
\begin{equation*}
\frac{1}{k!}\Big(\frac{t}{1-t}\Big)^{k}=\sum_{n=k}^{\infty}L(n,k)\frac{t^{n}}{n!}, \quad (k \ge 0),\quad L(n,k)=\frac{n!}{k!}\binom{n-1}{k-1},\quad (n \ge k).
\end{equation*}
(e) Let $\mathbf{P}=\left\{\langle x \rangle_{n,\lambda}\right\}$ be the sequence of degenerate rising factorials which are given by (see \cite{13})
\begin{equation*}
\langle x \rangle_{0,\lambda}=1,\quad \langle x \rangle_{n,\lambda}=x(x+\lambda)\cdots(x+(n-1)\lambda),\quad(n \ge 1).
\end{equation*}
Then $\langle{x \rangle}_{n,\lambda} \sim \big(1, \frac{1}{\lambda}(1-e^{-\lambda t})\big)$. Let $L_{\lambda}(n,k)$ be the degenerate Lah numbers given by
\begin{align*}
\sum_{n=k}^{\infty}L_{\lambda}(n,k)\frac{t^{n}}{n!}&=\frac{1}{k!}\big(e^{-\frac{1}{\lambda}\log (1-\lambda t)}-1\big)^{k}=\frac{1}{k!}\big(e_{-\lambda}(t)-1\big)^{k}.
\end{align*}
We derive that
\begin{align*}
&S_{2}(n,k;\mathbf{P})=L_{\lambda}(n,k), \quad S_{1}(n,k;\mathbf{P})=(-\lambda)^{n-k}L_{\frac{1}{\lambda}}(n,k)=S_{1,-\lambda}(n,k), \\
& \log_{\mathbf{P}}(1+t)=\sum_{n=1}^{\infty}S_{1,-\lambda}(n,1)\frac{t^{n}}{n!}=\log_{-\lambda}(1+t).
\end{align*}
(f) The central factorials $x^{[n]}$ are defined by (see \cite{4,5,25})
\begin{align*}
x^{[n]}=x\big(x+\frac{1}{2}n-1\big)_{n-1}, \,\,(n \ge 1),\quad x^{[0]}=1,
\end{align*}
where $x^{[n]} \sim (1, f(t)=e^{\frac{t}{2}}-e^{-\frac{t}{2}})$. Here we note that
\begin{equation*}
\bar{f}(t)=2\log\bigg(\frac{t+\sqrt{t^{2}+4}}{2}\bigg).
\end{equation*}
Carlitz-Riordan and Riordan (see \cite{3,4,11,23}) discussed the numbers $T_{1}(n,k)$ and $T_{2}(n,k)$ which are respectively called the central factorial numbers of the first kind and the second kind, and defined by
\begin{align*}
&\frac{1}{k!}\bigg(2\log\bigg(\frac{t+\sqrt{t^{2}+4}}{2}\bigg)\bigg)^{k}=\sum_{n=k}^{\infty}T_{1}(n,k)\frac{t^{n}}{n!},\quad  x^{[n]}=\sum_{k=0}^{n}T_{1}(n,k)x^{k}, \\
&\frac{1}{k!}\big(e^{\frac{t}{2}}-e^{-\frac{t}{2}}\big)^{k}=\sum_{n=k}^{\infty}T_{2}(n,k)\frac{t^{n}}{n!}, \quad x^{n}=\sum_{k=0}^{n}T_{2}(n,k)x^{[k]}. \\
\end{align*}
Let $\mathbf{P}=\left\{x^{[n]}\right\}$ be the sequence of central factorials. We find that
\begin{align*}
&S_{2}(n,k;\mathbf{P})=\sum_{l=k}^{n}T_{1}(n,l)S_{2}(l,k),\quad S_{1}(n,k;\mathbf{P})=\sum_{l=k}^{n}S_{1}(n,l)T_{2}(l,k), \\
&\log_{\mathbf{P}}(1+t)=\sum_{n=1}^{\infty}\sum_{l=1}^{n}S_{1}(n,l)T_{2}(l,1)\frac{t^{n}}{n!}=(1+t)^{\frac{1}{2}}-(1+t)^{-\frac{1}{2}}.
\end{align*}
(g) The central Bell polynomials $\mathrm{Bel}_{n}^{(c)}(x)$ are defined by (see \cite{16})
\begin{equation*}
e^{x(e^{\frac{t}{2}}-e^{-\frac{t}{2}})}=\sum_{n=0}^{\infty}\mathrm{Bel}_{n}^{(c)}(x)\frac{t^{n}}{n!}.
\end{equation*}
Then we have
\begin{align*}
\mathrm{Bel}_{n}^{(c)}(x) \sim \Big(1, f(t)=2\log\bigg(\frac{t+\sqrt{t^{2}+4}}{2}\bigg)\Big), \quad \mathrm{Bel}_{n}^{(c)}(x)=\sum_{k=0}^{n}T_{2}(n,k) x^{k}.
\end{align*}
Let $\mathbf{P}=\left\{\mathrm{Bel}_{n}^{(c)}(x)\right\}$ be the sequence of central Bell polynomials. We obtain that
\begin{align*}
&S_{2}(n,k;\mathbf{P})=\sum_{l=k}^{n}T_{2}(n,l)S_{2}(l,k), \quad
S_{1}(n,k;\mathbf{P})=\sum_{l=k}^{n}S_{1}(n,l)T_{1}(l,k), \\
& \log_{\mathbf{P}}(1+t)=\sum_{n=1}^{\infty}\sum_{l=1}^{n}S_{1}(n,l)T_{1}(l,1)\frac{t^{n}}{n!}=2\log\bigg(\frac{\log(1+t)+\sqrt{(\log(1+t))^{2}+4}}{2}\bigg).
\end{align*}
(h) The degenerate central factorial numbers of the first kind, $T_{1,\lambda}(n,k)$, and of the second kind, $T_{2,\lambda}(n,k)$, are respectively defined by (see \cite{17})
\begin{align*}
&\frac{1}{k!}\bigg(\log_{\lambda}\bigg(\frac{t+\sqrt{t^{2}+4}}{2}\bigg)^{2}\bigg)^{k}=\sum_{n=k}^{\infty}T_{1,\lambda}(n,k)\frac{t^{n}}{n!}, \quad x^{[n]}=\sum_{k=0}^{n}T_{1,\lambda}(n,k)(x)_{k, \lambda},\\
&\frac{1}{k!}\big(e_{\lambda}^{\frac{1}{2}}(t)-e_{\lambda}^{-\frac{1}{2}}(t)\big)^{k}=\sum_{n=k}^{\infty}T_{2,\lambda}(n,k)\frac{t^{n}}{n!},\quad (x)_{n,\lambda}=\sum_{k=0}^{n}T_{2,\lambda}(n,k)x^{[k]}.
\end{align*}
The degenerate central Bell polynomials $\mathrm{Bel}_{n,\lambda}^{(c)}(x)$ are defined by (see \cite{17})
\begin{equation*}
e^{x(e_{\lambda}^{\frac{1}{2}}(t)-e_{\lambda}^{-\frac{1}{2}}(t))}=\sum_{n=0}^{\infty}\mathrm{Bel}_{n,\lambda}^{(c)}(x)\frac{t^{n}}{n!}.
\end{equation*}
Then we see that
\begin{align*}
\mathrm{Bel}_{n,\lambda}^{(c)}(x) \sim \bigg(1, f(t)=\log_{\lambda}\bigg(\frac{t+\sqrt{t^{2}+4}}{2}\bigg)^{2}\bigg),\quad
\mathrm{Bel}_{n,\lambda}^{(c)}(x)=\sum_{k=0}^{n}T_{2,\lambda}(n,k) x^{k}.
\end{align*}
Let $\mathbf{P}=\left\{\mathrm{Bel}_{n,\lambda}^{(c)}(x)\right\}$ be the sequence of degenerate central Bell polynomials. We observe that
\begin{align*}
&S_{2}(n,k;\mathbf{P})=\sum_{l=k}^{n}T_{2,\lambda}(n,l)S_{2}(l,k), \quad S_{1}(n,k;\mathbf{P})=\sum_{l=k}^{n}S_{1}(n,l)T_{1,\lambda}(l,k), \\
& \log_{\mathbf{P}}(1+t)=\sum_{n=1}^{\infty}\sum_{l=1}^{n}S_{1}(n,l)T_{1,\lambda}(l,1)\frac{t^{n}}{n!}=\log_{\lambda}\bigg(\frac{\log(1+t)+\sqrt{(\log(1+t))^{2}+4}}{2}\bigg)^{2}.
\end{align*}
(i) Let $\mathbf{P}=\left\{LB_{n}(x)\right\}$ be the sequence of Lah-Bell polynomials. Then it is given by (see \cite{11})
\begin{equation*}
e^{x(\frac{t}{1-t})}=\sum_{n=0}^{\infty}LB_{n}(x)\frac{t^n}{n!}, \quad LB_{n}(x)=\sum_{k=0}^{n}L(n,k)x^{k},
\end{equation*}
so that $LB_{n}(x) \sim (1, f(t)=\frac{t}{1+t})$. We get that
\begin{align*}
&S_{2}(n,k;\mathbf{P})=\sum_{l=k}^{n}L(n,l)S_{2}(l,k), \quad S_{1}(n,k;\mathbf{P})=\sum_{l=k}^{n}(-1)^{l-k}S_{1}(n,l)L(l,k), \\
&\log_{\mathbf{P}}(1+t)=\sum_{n=1}^{\infty}\sum_{l=1}^{n}(-1)^{l-1}S_{1}(n,l)L(l,1)\frac{t^{n}}{n!}=\frac{\log(1+t)}{1+\log(1+t)}.
\end{align*}
(j) Let $\mathbf{P}=\left\{LB_{n,\lambda}(x)\right\}$ be the sequence of degenerate Lah-Bell polynomials. Then it is given by (see \cite{13})
\begin{equation*}
e_{\lambda}^{x}(\frac{t}{1-t})=\sum_{n=0}^{\infty}LB_{n,\lambda}(x)\frac{t^n}{n!}, \quad LB_{n,\lambda}(x)=\sum_{k=0}^{n}L(n,k)(x)_{k,\lambda}.
\end{equation*}
Then we see that $LB_{n,\lambda}(x) \sim (1,\frac{e^{\lambda t}-1}{\lambda+e^{\lambda t}-1})$. We have that
\begin{align*} 
&S_{2}(n,k;\mathbf{P})=\sum_{l=k}^{n}L(n,l)L_{-\lambda}(l,k), \quad S_{1}(n,k;\mathbf{P})=\sum_{l=k}^{n}(-1)^{l-k}S_{1,\lambda}(n,l)L(l,k), \\
&\log_{\mathbf{P}}(1+t)=\sum_{n=1}^{\infty}\sum_{l=1}^{n}(-1)^{l-1}S_{1,\lambda}(n,l)L(l,1)\frac{t^{n}}{n!}=\frac{\log_{\lambda}(1+t)}{1+\log_{\lambda}(1+t)}.
\end{align*}
(k)  Let $\mathbf{P}=\left\{\mathrm{Bel}_{n}(x)\right\}$ be the sequence of Bell polynomials. Then it given by (see \cite{5,24,25})
\begin{equation*}
e^{x(e^{t}-1)}=\sum_{n=0}^{\infty}\mathrm{Bel}_{n}(x)\frac{t^{n}}{n!},\quad \mathrm{Bel}_{n}(x)=\sum_{k=0}^{\infty}S_{2}(n,k)x^{k}.  
\end{equation*}
Then we see that $\mathrm{Bel}_{n}(x) \sim (1, \log (1+t))$. We show that
\begin{align*}
&S_{2}(n,k;\mathbf{P})=\sum_{l=k}^{n}S_2(n,l)S_{2}(l,k), \quad S_{1}(n,k;\mathbf{P})=\sum_{l=k}^{n}S_{1}(n,l)S_{1}(l,k), \\
&\log_{\mathbf{P}}(1+t)=\sum_{n=1}^{\infty} \sum_{l=1}^{n}S_{1}(n,l)S_{1}(l,1)\frac{t^{n}}{n!}=\log \big(1+\log(1+t)\big).
\end{align*}
(l) Let $\mathbf{P}=\left\{\mathrm{Bel}_{n,\lambda}(x)\right\}$ be the sequence of partially degenerate Bell polynomials. Then it given by (see \cite{18})
\begin{equation*}
e^{x(e_{\lambda}(t)-1)}=\sum_{n=0}^{\infty}\mathrm{Bel}_{n,\lambda}(x)\frac{t^{n}}{n!}, \quad \mathrm{Bel}_{n,\lambda}(x)=\sum_{k=0}^{n}S_{2,\lambda}(n,k)x^{k}.
\end{equation*}
Then $\mathrm{Bel}_{n,\lambda}(x) \sim (1, \log_{\lambda}(1+t))$. We derive that
\begin{align*}
&S_{2}(n,k;\mathbf{P})=\sum_{l=k}^{n}S_{2,\lambda}(n,l)S_{2}(l,k), \quad S_{1}(n,k;\mathbf{P})=\sum_{l=k}^{n}S_{1}(n,l)S_{1,\lambda}(l,k), \\
&\log_{\mathbf{P}}(1+t)=\sum_{n=1}^{\infty} \sum_{l=1}^{n}S_{1}(n,l)S_{1,\lambda}(l,1)\frac{t^{n}}{n!}=\log_{\lambda}\big(1+\log(1+t)\big).
\end{align*}
(m) Let $\mathbf{P}=\left\{\phi_{n,\lambda}(x)\right\}$ be the sequence of fully degenerate Bell polynomials. Then it is given by (see \cite{19})
\begin{equation*}
e_{\lambda}^{x}(e_{\lambda}(t)-1)=\sum_{n=0}^{\infty}\phi_{n,\lambda}(x)\frac{t^{n}}{n!}, \quad \phi_{n,\lambda}(x)=\sum_{k=0}^{n}S_{2,\lambda}(n,k)(x)_{k,\lambda}.
\end{equation*}
Then $\phi_{n,\lambda}(x) \sim \big(1, \log_{\lambda}(1+\frac{1}{\lambda}(e^{\lambda t}-1))\big)$. We find that
\begin{align*}
&S_{2}(n,k;\mathbf{P})=\sum_{m=k}^{n}S_{2,\lambda}(n,m)L_{-\lambda}(m,k), \quad S_{1}(n,k;\mathbf{P})=\sum_{l=k}^{n}S_{1,\lambda}(n,l)S_{1,\lambda}(l,k), \\
&\log_{\mathbf{P}}(1+t)=\sum_{n=1}^{\infty} \sum_{l=1}^{n}S_{1,\lambda}(n,l)S_{1,\lambda}(l,1)\frac{t^{n}}{n!}=\log_{\lambda}\big(1+\log_{\lambda}(1+t)\big).
\end{align*}
(n) Let $\mathbf{P}=\left\{M_{n}(x)\right\}$ be the sequence of Mittag-Leffler polynomials. That is, $M_{n}(x) \sim (1,f(t)=\frac{e^{t}-1}{e^{t}+1})$, with $\bar{f}(t)=\log \big(\frac{1+t}{1-t}\big)$ (see \cite{25}). We obtain that
\begin{align*}
&S_{2}(n,k;\mathbf{P})=2^{k}L(n,k), \quad S_{1}(n,k;\mathbf{P})=(-1)^{n-k}L(n,k)\frac{1}{2^{n}}, \\
&\log_{\mathbf{P}}(1+t)=\sum_{n=1}^{\infty}(-1)^{n-1}L(n,1)\frac{1}{2^{n}}\frac{t^{n}}{n!}=\frac{t}{2+t}.
\end{align*}
(o) Let $\mathbf{P}=\left\{L_{n}(x)\right\}$ be the sequence of Laguerre polynomials of order -1 (see \cite{25}).
That is, $L_{n}(x) \sim (1, f(t)=\frac{t}{t-1})$, with $\bar{f}(t)=\frac{t}{t-1}$. 
We observe that
\begin{align*}
&S_{2}(n,k;\mathbf{P})=\sum_{l=k}^{n}(-1)^{l}L(n,l)S_{2}(l,k), \quad
S_{1}(n,k;\mathbf{P})=(-1)^{k}\sum_{l=k}^{n}S_{1}(n,l)L(l,k), \\
&\log_{\mathbf{P}}(1+t)=-\sum_{n=1}^{\infty}\sum_{l=1}^{n}S_{1}(n,l)L(l,1)\frac{t^{n}}{n!}=-\frac{\log(1+t)}{1-\log(1+t)}.
\end{align*}

\section{Conclusion}
In this paper, we have developed a unified approach to Stirling numbers associated with a delta series $f(t)$, which corresponds to Stirling numbers associated with the sequence $p_{n}(x) \sim (1,f(t))$.  We have successfully addressed the structural deficiencies found in recent probabilistic Stirling and $B$-Stirling number variants. Specifically, our redefined Stirling numbers $S_{1}(n,k; f(t))$ and $S_{2}(n,k; f(t))$ restore the fundamental orthogonality and inverse relations, ensuring that these sequences remain robust tools for inversion formulas and combinatorial identities. We established the Schl\"{o}milch-type formula for these generalized numbers. This formula not only provides a direct computational link between the two kinds of Stirling numbers but also allows for a novel expansion of the associated logarithm $\log_{f(t)}(1+t)$. The fifteen examples detailed in Section 3 demonstrate that our framework is not merely a theoretical exercise but a versatile system capable of recovering and extending a wide array of known results. Whether applied to degenerate cases or probabilistic extensions, the delta series approach provides a consistent and computationally efficient methodology. Future research may involve applying this approach to other special sequences or extending these results to the $q$-analogues.

\end{document}